\newcommand{\id}{\mathfrak}  
\newcommand{\A}{\mathbb{A}}
\newcommand{\yy}{\mathit{y}}
\newcommand{\hilb}{{\mathcal{H}\textnormal{ilb}}}
\newcommand{\Gr}{Gr\"obner }
\newcommand{\T}{\mathbb{T}} 
\newcommand{\PP}{\mathbb{P}} 
\newcommand{\ZZ}{\mathbb{Z}}
\date{} 
\begin{document} 




\centerline {\Large{\bf HOMOGENEOUS VARIETIES}} 

\centerline{} 

\centerline{\Large{\bf FOR HILBERT SCHEMES}} 

\centerline{} 

\centerline{\bf {Giorgio Ferrarese}} 

\centerline{} 

\centerline{Dipartimento di Matematica dell'Universit\`{a}} 

\centerline{Via Carlo Alberto 10} 

\centerline{10123 Torino, Italy} 

\centerline{ {\small giorgio.ferrarese@unito.it}} 

\centerline{} 

\centerline{\bf {Margherita Roggero}} 

\centerline{} 

\centerline{Dipartimento di Matematica dell'Universit\`{a}} 

\centerline{Via Carlo Alberto 10} 

\centerline{10123 Torino, Italy} 

\centerline{ {\small margherita.roggero@unito.it}} 

\newtheorem{Theorem}{\quad Theorem}[section] 

\newtheorem{Definition}[Theorem]{\quad Definition} 

\newtheorem{Corollary}[Theorem]{\quad Corollary} 
\newtheorem{prop}[Theorem]{\quad Proposition} 

\newtheorem{Lemma}[Theorem]{\quad Lemma} 

\newtheorem{Example}[Theorem]{\quad Example} 
\newtheorem{remark}[Theorem]{\quad Remark} 

\begin{abstract} The paper concerns  the affine varieties that are homogeneous with respect to a (non-standard)  graduation over the group $\ZZ^m$. Among the other properties it is  shown that every such a variety  can be embedded in its Zariski tangent space at the origin, so that it is   smooth if and only if it is   isomorphic to an affine space. The results directly apply to the study of  Hilbert schemes of subvarieties in $\PP^n$.
\end{abstract} 

{\bf Mathematics Subject Classification:} 13A02, 13F20, 14C05 \\ 

{\bf Keywords:} $G$-graded rings, Initial ideals, Hilbert schemes

\section{Introduction} 
In the present paper we collect and analyze some properties of the affine varieties that are homogeneous with respect to a   graduation over the group $\ZZ^m$. Our attention on this kind of varieties arises from the strict   connection,  recently highlighted,    with   the Hilbert schemes  $\hilb^n_{p(z)}$ of subvarieties in $\PP^n$ (that is saturated, homogeneous ideals in $k[x_0, \dots, x_n]$) with Hilbert polynomial $p(z)$. 

Since any ideal in $k[x_0, \dots, x_n]$ has the same Hilbert function as the monomial ideal which is  its initial ideal with respect to any fixed term ordering (see \cite{Green}), we can consider in $\hilb^n_{p(z)}$  the equivalence classes containing all the ideals having the same initial ideal. In \cite{NS} it is proved that the equivalence classes (called \Gr strata) are in fact locally closed subvarieties in $\hilb^n_{p(z)}$ and that each of them can be algorithmically realized, using Buchberger characterization of \Gr bases, as an affine variety in a suitable affine space $\A^N_k$. However both $N$ and the number of equations defining the \Gr stratum of a given monomial ideal  are in general very big, so that an explicit computation can be really heavy.

A considerable improvement of the computational weight as well as -some interesting theoretical consequences can be obtained thanks to the natural homogeneous structure of each \Gr stratum introduced in \cite{RT} (for the zero-dimensional case, see also \cite[Corollary 3.7]{robbiano-2008}).  Especially all the results obtained in the present paper can be usefully applied in order to obtain both geometrical properties of Hilbert schemes and explicit equations defining $\hilb^n_{p(z)}$ for a fixed polynomial $p(z)$.

\medskip 

In \S 1 we recall the general definition of homogeneous ideals and affine varieties with respect to a graduation over the group $\ZZ^m$ and show that this structure respects the usual operations on ideals and the primary decomposition: this allows us to apply our results also to each irreducible component and to each component of its support, when a \Gr stratum lives on more than one connected component   or on a non-reduced component of $\hilb^n_{p(z)}$.

\medskip

In \S 3 we show that such an homogeneous affine  variety $V$ can be  embedded in an affine space that can be, in a natural way, identified with the Zariski tangent space to $V$ at the origin; this is of course the minimal affine space in which $V$ can be isomorphically embedded. In the case of a \Gr stratum using this minimal embedding we can obtain the maximal reduction on the number of involved parameters.

\medskip

 In \S 4 we consider the action over a homogeneous  variety $V$ of a suitable torus induced by the graduation. The orbits of this action give a covering of $V$ by locally closed rational subvarieties, so that $V$ is rationally chain connected; the union of all the fibers of the same dimension give a locally closed stratification of $V$.  Moreover homogeneous cycles generate the  Chow groups  so that in low dimension $A_i(V)$ is generated by the $i$-dimensional fibers (more precisely, by the classes of the closure of those fibers); for instance $A_1(V)$ is always generated by the 1-dimensional fibers. One of the most important results about Hilbert schemes, the connectedness, is   obtained using chains of curves  connecting points on $\hilb_{p(z)}^n$ that correspond to monomial ideals (see \cite{Hcon} for the original proof or \cite{PS} for a different one); the rational curves on $\hilb_{p(z)}^n$ that are the closure of 1-dimensional orbits on \Gr strata could form a \lq\lq connecting net\rq\rq\ over $\hilb_{p(z)}^n$ allowing algorithmic procedures with a reduced computational weight.

\section{$\lambda$-homogeneous affine varieties}

In this section we consider a polynomial ring $k[\yy]:=k[y_1,\dots,y_s]$ over an algebraically closed field $k$. We will denote by   $\T_\yy$ the set of monomials in $k[\yy]$ and by $\overline{\T}_\yy$  the set of monomials   in  the field $k(\yy)$, that is the set of monomials with integer exponents $\yy^{\alpha}=y_1^{\alpha_1}\cdots  y_s^{\alpha_s}$, $\alpha_i \in \ZZ$: of course    $\overline{\T}_\yy $, with the usual product, is the free abelian  group on the set of   variables $y_1, \dots, y_s$. 

For general facts about gradings we refer to \cite{KrRob2} Ch. 4.

\begin{Definition} Let  $G$ be the abelian group $\ZZ^m$ and  $(g_1,\ldots,g_s)$  be a $s$-tuple  of elements in $G$, not necessarily distinct.
The group homomorphism:
\begin{equation}
 \lambda:\  \overline{\T}_\yy \longrightarrow G \hbox{\ \ \ given by \ \ \ } 
          \  y_i\, \longmapsto g_i
\end{equation}
induces a  graduation
$$\displaystyle k[\yy]=\bigoplus_{g\in G}k[\yy]_g$$
where 
for every $g\in G$ the $\lambda$-homogeneous component $k[\yy]_g$  is the $k$-vector space generated by all the monomials $\yy^{\alpha} \in \T_\yy$ such that $\lambda(\yy^{\alpha})=g$.

A polynomial $F\in k[\yy]$ is $\lambda$-homogeneous of $\lambda$-degree $g$ if $F\in k[\yy]_g$.
A $\lambda$-\emph{homogeneous} ideal  $\id{a}$  is a proper ideal ($\id{a}\neq k[\yy]$)   generated by $\lambda$-homogeneous polynomials. A $\lambda$-\emph{cone} is the subvariety $V=\mathcal{V}(\id{a})$ defined by a $\lambda$-homogeneous ideal.
\end{Definition}

\begin{Lemma}\label{ordine} In the above notation the following are equivalent:
\begin{enumerate}
\item $k[y]_{0_G}=k$;
\item  the group $G=\ZZ^m$ can be equipped with a structure   of totally ordered group   in such a way that $\lambda(y_i)\succ 0_G$, for $i=1, \dots, s$.
\end{enumerate}
\end{Lemma}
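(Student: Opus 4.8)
\emph{Plan.} The assertion $k[\yy]_{0_G}=k$ means exactly that the only monomial $\yy^{\alpha}\in\T_\yy$ with $\sum_i\alpha_ig_i=0_G$ is $\yy^{0}=1$; equivalently, there is no nontrivial relation $\sum_i\alpha_ig_i=0_G$ with exponents $\alpha_i\in\ZZ_{\ge 0}$. So the lemma says that the absence of such relations is equivalent to the existence of a compatible total order on $\ZZ^m$ that is positive on all the $g_i$. I would first dispose of the implication $(2)\Rightarrow(1)$, which is the soft one: in a totally ordered group one has $n\cdot g\succ 0_G$ whenever $g\succ 0_G$ and $n\ge 1$, and a finite sum of elements that are $\succeq 0_G$ is again $\succeq 0_G$, with equality only when every summand vanishes. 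Hence, if each $g_i\succ 0_G$, then $\lambda(\yy^{\alpha})=\sum_i\alpha_ig_i\succeq 0_G$ for all $\alpha\in\ZZ_{\ge 0}^s$, with equality precisely for $\alpha=0$; thus the homogeneous component of degree $0_G$ is spanned by $1$, i.e. $k[\yy]_{0_G}=k$.

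For $(1)\Rightarrow(2)$ the plan has three steps. First I would upgrade (1) to a statement over $\mathbb{R}$: the set $\{t\in\mathbb{R}_{\ge 0}^s:\sum_i t_ig_i=0_G\}$ is a rational polyhedral cone (its defining equalities and inequalities have integer coefficients), hence is generated by finitely many integral rays; were it nonzero it would contain a nonzero vector in $\ZZ_{\ge 0}^s$, contradicting (1). So no nonzero real $t\ge 0$ satisfies $\sum_i t_ig_i=0_G$. Second, I would invoke a Farkas-type alternative (Gordan's theorem) for the integer matrix $A$ whose columns are $g_1,\dots,g_s$: \emph{either} there is a nonzero $t\ge 0$ with $At=0$, \emph{or} there is a $v\in\mathbb{R}^m$ with $v^{\mathrm{T}}g_i>0$ for all $i$; by the first step the former fails, so such a $v$ exists (and, the inequalities being strict, $v$ may be taken in $\mathbb{Q}^m$, then in $\ZZ^m$ after clearing denominators).

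Third, I would refine the group homomorphism $\phi(a):=v^{\mathrm{T}}a$ into an honest total order. Since $\ZZ^m$ carries some compatible total group order $\succ'$ (for instance the lexicographic one), I declare $a\succ 0_G$ to mean that $\phi(a)>0$, or that $\phi(a)=0$ and $a\succ' 0_G$. Translation invariance and closure of the positive elements under addition are immediate from the corresponding properties of $\phi$ and of $\succ'$ (if two elements are positive, either one of them has $\phi>0$ and then so does their sum, or both have $\phi=0$ and then their sum is $\succ'$-positive), while trichotomy for $\succ$ follows by comparing first the values of $\phi$ and, in case of a tie, using trichotomy for $\succ'$. Then $\phi(g_i)=v^{\mathrm{T}}g_i>0$ forces $g_i\succ 0_G$, which is (2).

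The only non-routine ingredient is the second step: it is precisely where hypothesis (1) is used, through the duality between ``the $g_i$ admit no nontrivial nonnegative syzygy'' and ``some linear functional is strictly positive on all the $g_i$'' — equivalently, the pointedness of $\mathrm{cone}(g_1,\dots,g_s)\subseteq\mathbb{R}^m$ together with the nonvanishing of each $g_i$. The rationality remark in the first step and the verifications in the third step are bookkeeping. I would also note, with the application to \Gr strata in mind, that this construction realizes $\succ$ by a finite list of integral functionals (a Robbiano matrix), namely $v$ followed by the rows defining $\succ'$, though this refinement is not needed for the statement itself.
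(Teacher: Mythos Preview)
Your proof is correct and follows essentially the same route as the paper. For $(2)\Rightarrow(1)$ both arguments use that nonnegative combinations of positive elements in an ordered group are nonnegative, with equality only for the trivial combination; for $(1)\Rightarrow(2)$ both pass from the integer hypothesis to the real statement that the cone $\mathrm{cone}(g_1,\dots,g_s)$ is pointed, extract a linear functional strictly positive on every $g_i$ (you via Gordan's theorem, the paper via a separating hyperplane meeting the cone only at the origin), and then refine this functional to a total group order using an auxiliary term order as tie-breaker. Your write-up is in fact more explicit about the rationality step (why a nonzero real relation forces a nonzero integer relation) and about the verification that the refined order is a group order, but the key idea---the Farkas/Gordan duality between ``no nontrivial nonnegative syzygy among the $g_i$'' and ``some functional is strictly positive on all $g_i$''---is the same in both.
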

\begin{proof} \textit{2.} $ \Rightarrow  $ \textit{1.} For every non constant monomial $y^{\alpha}$  (that is $\alpha=[\alpha_1, \dots, \alpha_s]$, $\alpha_i \geq 0$ and   $\alpha_{i_0}> 0$) we have $\lambda(y^{\alpha})\geq \alpha_{i_0}\lambda(y_{i_0})\succ 0_G$ and then $y^{\alpha}\notin k[y]_{0_G}$.  

\medskip

\textit{1.} $ \Rightarrow  $ \textit{2.} Let $W=\{c_1 \lambda(y_1)+ \dots +c_s \lambda(y_s) \  / \ c_i \in \mathbb{R}, \ c_i\geq 0 \}$. The condition\textit{ 1.} insures that $W$ does not contain any couple of non-zero opposite vectors $v=c_1 \lambda(y_1)+ \dots +c_s \lambda(y_s)$ and $-v=c'_1 \lambda(y_1)+ \dots +c'_s \lambda(y_s)$; in fact if there is  such a couple of vectors  with real coefficients $c_i,c'_i$, then there is also a couple of them with integer, non negative, coefficients, so that $y_1^{c_1+c'_1}\cdots y_s^{c_s+c'_s}$ would be a non-constant monomial in $k[y]_{0_G}$.

Then there exists an hyperplane $\pi$ in $\mathbb{R}^m$  meeting $W $ only in the origin. Using an orthogonal vector $\omega$ to $\pi$ we can define a total order $\preceq$ in $\ZZ^m$ such that $\lambda(y_i)\succ 0_G$ for every $i=1, \dots,s$: $y^{\alpha}\preceq y^{\beta}$ if $\omega \cdot \lambda(y^{\alpha}) \geq \omega \cdot \lambda(y^{\beta})$ and using any term ordering as a tie breaker (see \cite{Sturm} Ch 1). 
\end{proof}

It is clear by the previous lemma that the assumption $k[\yy]_{0_G}=k$ insures that  every $\lambda$-homogeneous ideal is contained in the only $\lambda$-homogeneous maximal ideal $\id{M}=(y_1, \dots, y_s)$ and so the origin $O$ belongs to every $\lambda$-cone.
\begin{Example}
The usual graduation on $k[\yy]$ is given taking $G=\ZZ$ and  $g_i=1$. More generally, if $G=\ZZ$ and $n_i$ are positive integers, the homomorphism $\lambda(y_i)=n_i $ gives the weighted graduation.
\end{Example}

In this paper we  always assume that $\lambda$ satisfies   the equivalent conditions given in  Lemma \ref{ordine}and use the complete terminology  \lq\lq $\lambda$-graduation\rq\rq, \lq\lq $\lambda$-degree\rq\rq, \dots,   leaving the general terms graduation,  degree, \dots, for the standard graduation on $k[y]$, where all the variables have degree 1.


\begin{Lemma} \label{primari}\begin{enumerate}
\item If $\id{a}$, $\id{b}$ are $\lambda$-homogeneous ideals, then  $\id{a}+\id{b}$,  $\id{a}\cap \id{b}$, $\id{a}  \id{b}$  and $\sqrt{\id a}$ are $\lambda$-homogeneous.

\item if $\id{a}$ is the ideal generated by all the   $\lambda$-homogeneous elements in a primary ideal  $\id{q}$, then $\id{a}$  is primary.

\item If  $\id{a}$ is $\lambda$-homogeneous, then it has a primary decomposition  given by $\lambda$-homogeneous primary ideals.
\end{enumerate}
\end{Lemma}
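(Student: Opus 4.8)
The strategy is to exploit the totally ordered group structure on $G$ provided by Lemma \ref{ordine}, which lets one argue with "leading $\lambda$-homogeneous components" exactly as one argues with leading terms in the graded case.

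\medskip

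\textbf{Part 1.} For $\id a+\id b$, $\id a\id b$, this is immediate since these ideals are generated by $\lambda$-homogeneous elements (sums of generators, products of generators). For $\id a\cap\id b$: given $F\in\id a\cap\id b$, decompose $F=\sum_g F_g$ into its $\lambda$-homogeneous components; I claim each $F_g$ lies in $\id a\cap\id b$. Because $\id a$ is $\lambda$-homogeneous, writing $F=\sum h_j a_j$ with $a_j$ $\lambda$-homogeneous and collecting terms of each $\lambda$-degree shows every $\lambda$-homogeneous component of $F$ is again in $\id a$; likewise for $\id b$. Hence $F_g\in\id a\cap\id b$ for all $g$, so $\id a\cap\id b$ is generated by $\lambda$-homogeneous elements. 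For $\sqrt{\id a}$: if $F\in\sqrt{\id a}$, pick the total order on $G$ with $\lambda(y_i)\succ 0_G$; let $F_{\bar g}$ be the $\lambda$-homogeneous component of $F$ of largest $\lambda$-degree $\bar g$. If $F^N\in\id a$, then the top $\lambda$-homogeneous component of $F^N$ is $F_{\bar g}^{\,N}$, and since $\id a$ is $\lambda$-homogeneous this component lies in $\id a$; thus $F_{\bar g}\in\sqrt{\id a}$. Subtract $F_{\bar g}$ and induct on the number of nonzero components to conclude $\sqrt{\id a}$ is $\lambda$-homogeneous.

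\medskip

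\textbf{Part 2.} Let $\id a$ be generated by the $\lambda$-homogeneous elements of the primary ideal $\id q$. Since $\sqrt{\id q}=\id p$ is prime, Part 1 (applied with the ambient ideal $\id q$, noting $\id a\subseteq\id q$ so $\sqrt{\id a}\subseteq\id p$) together with the fact that $\id a$ contains every $\lambda$-homogeneous element of $\id p^k$ for suitable $k$ will give $\sqrt{\id a}=\id p^{\,\lambda}$, the ideal generated by $\lambda$-homogeneous elements of $\id p$, which is prime by the standard argument: if $FG\in\id p^{\lambda}$ with $F,G$ $\lambda$-homogeneous (enough to check this by homogeneity), then $FG\in\id p$ prime, so $F$ or $G\in\id p$, hence in $\id p^{\lambda}$. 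To see $\id a$ itself is primary: suppose $FG\in\id a$ with $G\notin\sqrt{\id a}=\id p^\lambda$; decomposing into $\lambda$-homogeneous components and using the total order, reduce to $F,G$ $\lambda$-homogeneous; then $FG\in\id q$, and since $G\notin\id p=\sqrt{\id q}$ we get $F\in\id q$, and $F$ being $\lambda$-homogeneous gives $F\in\id a$. The delicate point is the reduction to $\lambda$-homogeneous factors: order the components of $F$ and $G$ by $\succeq$ and compare top (or bottom) components of the product $FG$, exactly as in the classical proof that a homogeneous ideal whose homogeneous "test" is primary is primary.

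\medskip

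\textbf{Part 3.} Start from any primary decomposition $\id a=\id q_1\cap\dots\cap\id q_r$. Replace each $\id q_i$ by $\id q_i^{\lambda}$, the ideal generated by the $\lambda$-homogeneous elements of $\id q_i$, which is primary by Part 2. Since $\id a$ is $\lambda$-homogeneous we have $\id a=\id a^{\lambda}=(\bigcap\id q_i)^{\lambda}$, and one checks $(\bigcap\id q_i)^{\lambda}=\bigcap\id q_i^{\lambda}$: the inclusion $\subseteq$ is clear, and for $\supseteq$, an element $F$ of $\bigcap\id q_i^\lambda$ has all its $\lambda$-homogeneous components in each $\id q_i$ (since $\id q_i^\lambda\subseteq\id q_i$ and $\id q_i^\lambda$ is $\lambda$-homogeneous), hence in $\bigcap\id q_i=\id a$. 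This yields a $\lambda$-homogeneous primary decomposition of $\id a$.

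\medskip

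The main obstacle throughout is the reduction step in Part 2: turning a product relation $FG\in\id a$ (resp. $FG\in\id q$) with arbitrary $F,G$ into one with $\lambda$-homogeneous factors. This is where the total order from Lemma \ref{ordine} is essential — it guarantees a well-defined top component of a product is the product of the top components, so no cancellation occurs — and care is needed to run the induction cleanly.
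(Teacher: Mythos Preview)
Your proposal is correct and follows essentially the same route as the paper. The only notable difference is in Part~2: the paper verifies the primary condition directly (showing $FG\in\id a$, $F\notin\id a$ $\Rightarrow$ $G^r\in\id a$ for $\lambda$-homogeneous $F,G$, then invoking the reduction to homogeneous factors), without first identifying $\sqrt{\id a}$ with $\id p^{\lambda}$ or separately proving it prime; your detour through the radical is harmless but unnecessary.
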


\begin{proof} \textit{1.} is quite obvious. We only verify the statement about $\sqrt{\id a}$. Let fix any total order in $G$ as in Lemma  \ref{ordine}; 
it is sufficient to prove that for every  $F \in \sqrt{\id{a}}$, its $\lambda$-homogeneous component of maximal $\lambda$-degree  $F_m$ belongs to $\sqrt{\id{a}}$. By definition there is a suitable integer $r$ such that  $F^r\in \id{a}$. The maximal $\lambda$-homogeneous component of  $F^r$ is  $(F_m)^r$ which belongs to  ${\id{a}}$ because ${\id{a}}$  is  $\lambda$-homogeneous; then    $F_m \in \sqrt{\id{a}}$.

\medskip

\textit{2.}  Let  $F,\ G$ be polynomials  such that  $FG\in \id{a}$, but $F\notin \id a$. We have to prove that $G^r$ belongs to $\id a$ for some integer $r$. As $\id a$ is $\lambda$-homogeneous, it is sufficient to prove this property assuming that    $F$ and $G$ are  $\lambda$-homogeneous. So we have $FG\in \id q$ because  $\id{a} \subseteq \id q$ and $F\notin \id q$ (in fact if $F\in \id q$ then also  $F\in \id a$, because both $F$ and $\id a$ are $\lambda$-homogeneous). By the hypothesis, $\id q$ is a primary ideal;  then  $G^r \in \id q$ for some integer $r$ so that  $G^r \in \id a$, because $G^r$ is $\lambda$-homogeneous. 

\medskip

\textit{3.} Let  $ \bigcap \id{q}_i$ be a primary decomposition of   $\id{a}$ and denote by  $\overline{\id{q}_i}$ the ideal generated by all the homogeneous elements in $\id{q}_i$: thanks to the previous item, we know that  $\overline{\id{q}_i}$ is  primary. Moreover $\id{a}=\bigcap \id{q}_i  \supseteq \bigcap\overline{  \id{q}_i}   $.  On the other hand if $F\in \id{a}$ is    $\lambda$-homogeneous then   $F\in {\id{q}_i}$  and so   $F\in \overline{\id{q}_i}$ for every $i$. Thus  we obtain the opposite inclusion  $\id{a} \subseteq \bigcap\overline{  \id{q}_i}$, because  $\id{a}$ is generated by its $\lambda$-homogeneous elements.

\end{proof}

\begin{remark} If $\id {q}$ is an isolated component  of a $\lambda$-homogeneous ideal $\id a$,  then $ \id{q}$ is $\lambda$-homogeneous, because  isolated components are uniquely determined. However, not every   embedded component is necessarily $\lambda$-homogeneous.
\end{remark}

\begin{Example} Let $\id{a}$ be the homogeneous ideal $(x^2,xy)$ in $k [x,y]$ with respect to the usual graduation. Then $\id{a}=(x)\cap (x^2,y)=(x)\cap(x^2,xy,y^r+x)$ ($r\geq 2$) has  a homogeneous primary decompositions and also a primary decompositions having a non-homogeneous  embedded component.
\end{Example}

\begin{Corollary}
Let $V$ be a  $\lambda$-cone in $\A^s$ defined by the $\lambda$-homogeneous ideal $\id a$ in $k[\yy]$. Then
 $V_{\emph{red}}$ 
 and every irreducible component of $V$ are  $\lambda$-cones.
\end{Corollary}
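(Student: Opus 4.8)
The plan is to deduce everything from Lemma \ref{primari}. First, for $V_{\mathrm{red}}$: by definition $V_{\mathrm{red}} = \mathcal{V}(\sqrt{\id a})$, and part \textit{1.} of Lemma \ref{primari} tells us that $\sqrt{\id a}$ is $\lambda$-homogeneous whenever $\id a$ is. Hence $V_{\mathrm{red}}$ is the subvariety defined by a $\lambda$-homogeneous ideal, i.e.\ a $\lambda$-cone, by definition.

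Next, for the irreducible components: let $W$ be an irreducible component of $V$. Then $W = \mathcal{V}(\id p)$ for a minimal prime $\id p$ over $\id a$. By part \textit{3.} of Lemma \ref{primari}, $\id a$ admits a primary decomposition $\id a = \bigcap \overline{\id q}_i$ into $\lambda$-homogeneous primary ideals; passing to radicals, $\sqrt{\id a} = \bigcap \sqrt{\overline{\id q}_i}$, and each $\sqrt{\overline{\id q}_i}$ is $\lambda$-homogeneous (again by part \textit{1.}) and prime. The minimal primes over $\id a$ are exactly the minimal members of this finite family of associated primes, so $\id p$ is one of the $\sqrt{\overline{\id q}_i}$ (or equivalently, one may invoke the remark that isolated components of a $\lambda$-homogeneous ideal are $\lambda$-homogeneous, since they are uniquely determined). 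Either way, $\id p$ is $\lambda$-homogeneous, so $W = \mathcal{V}(\id p)$ is a $\lambda$-cone.

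There is essentially no obstacle here: the corollary is a direct packaging of the lemma, the only genuine content being that radicals and isolated primary components preserve $\lambda$-homogeneity, which was already established. The one point worth a sentence of care is the passage ``$\sqrt{\id a} = \bigcap \sqrt{\id q_i}$ and each $\sqrt{\id q_i}$ is prime'', together with the observation that minimal primes of $\id a$ coincide with the isolated associated primes; this is standard commutative algebra and needs no reproving. I would therefore keep the proof to two or three lines, citing parts \textit{1.} and \textit{3.} of Lemma \ref{primari} and the remark on isolated components.
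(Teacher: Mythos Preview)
Your proposal is correct and matches the paper's approach: the paper states the corollary without proof, treating it as an immediate consequence of Lemma~\ref{primari} (parts \textit{1.} and \textit{3.}) together with the remark on isolated components, exactly as you do. There is nothing to add.
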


\section{Minimal embedding of a $\lambda$-cone}

The classical definition of the Hilbert scheme $\hilb^{n}_{p(z)}$ realizes it as a closed subvariety of a Grassmannian with a \lq\lq very big\rq\rq\ dimension; also the above   quoted stratification  by \Gr strata gives a locally closed covering of $\hilb^n_{p(z)} $ by homogeneous affine varieties in suitable \lq\lq big\rq\rq\ affine spaces. In the present section we will see that for every homogeneous affine variety there is an  embedding  in an affine space of minimal dimension, that we can identify with the Zariski tangent space at the origin. 

  More precisely if  $V$ is a $\lambda$-cone in $\A^s$,  we define some special linear subspaces $\A^d$ in $\A^s$,    such that the projection $\pi \colon \A^s\to\A^d$ has  interesting properties with respect to $V$.   First of all, $\pi$ is $\lambda$-homogeneous, so that, with respect to the graduation $\A^d$  induced by $\lambda$,  $\pi(V)$ is a $\lambda$-cone and  $\pi \colon V \to \pi(V)$ is an isomorphism. Furthermore if we choose such a linear subspace $\A^d$ of minimal dimension, then  it can be identified  with the Zariski tangent space to $\pi(V)$ at the origin.  All the objects that we have just  described (i.e. $\A^d$, $\pi$, $\pi(V)$, $\dots$) are very easy to obtain  from both a theoretical and a computational point of view.

\begin{Definition} Let   $\id{a}$ be a $\lambda$-homogeneous ideal in $k[y]=k[y_1, \dots,y_s]$. We will denote by $L(\id{a})$ the  $k$-vector space of the linear forms that are the degree 1 homogeneous component of some element in $\id a$ (here \lq\lq homogeneous\rq\rq is related to the usual graduation of $k[y_1,\ldots,y_s]$, that is the $\ZZ$-graduation with all variables of degree 1) and by $T(V)$ the linear subvariety in $\A^s$ (and also $\lambda$-cone) defined by the ideal generated by $L(\id{a})$.
\end{Definition}

A set of generators for $L(\id{a})$ can be simply obtained as the degree 1 homogeneous component of the polynomials in any set of generators of $\id a$. 

\begin{Theorem}\label{elim_th}  
  Let $V$ be a  $\lambda$-cone in $\A^s$ defined by a $\lambda$-homogeneous ideal $\id a$ in $k[\yy]$. Consider any subset $\{y''\} $  of  $d$  variables  in the set of variables $\{\yy\} $ such that   $L(\id{a})\cup \{ y''\} $ generates the $k$-vector space of the linear forms in $k[y]$ and let $\A^s$ the affine space with coordinates $\yy''$ and the graduation induced by $\lambda$.
  
   The projection $\pi\colon \A^s \to \A^d$  induces a $\lambda$-homogeneous isomorphism  $V\simeq \pi(V)$.
  
 If moreover $d=s-\emph{dim}_k(L(\id{a}))$, then $\A^s$ is the Zariski tangent space $T_O(\pi(V))$ of  $\pi(V)$ at the origin. 
 \end{Theorem}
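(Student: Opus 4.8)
The plan is to reduce the whole statement to the single claim that the composite $k$-algebra homomorphism
$$k[\yy'']\hookrightarrow k[\yy]\longrightarrow k[\yy]/\id a$$
is \emph{surjective} (here $\A^d$ is the affine space on the coordinates $\yy''$, graded via $\lambda$, and $\pi$ is the corresponding projection). Granting surjectivity, $\pi|_V$ is a closed immersion, hence an isomorphism of $V$ onto $\mathcal V(\id a\cap k[\yy''])$; moreover $\id a\cap k[\yy'']$ is the kernel of a $\lambda$-graded homomorphism, hence a $\lambda$-homogeneous ideal, so $\pi(V)$ is a $\lambda$-cone and the isomorphism is $\lambda$-homogeneous. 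So the core of the proof is the surjectivity. Two preliminaries first: since $\id a\subseteq\id M=(y_1,\dots,y_s)$, it is generated by $\lambda$-homogeneous polynomials $F_1,\dots,F_r$ with no constant term, and $L(\id a)$ is spanned by their standard-degree-$1$ parts $(F_i)_1$; since every standard-homogeneous part of a $\lambda$-homogeneous polynomial is $\lambda$-homogeneous of the same $\lambda$-degree, each $(F_i)_1$ is $\lambda$-homogeneous, so $L(\id a)$ and $\langle\yy''\rangle$ are $\lambda$-graded subspaces of the space of linear forms. Also, by Lemma \ref{ordine} we may fix a total order $\preceq$ on $G$ with $\lambda(y_i)\succ 0_G$, so $\lambda(m)\succ 0_G$ for every non-constant monomial $m$.

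For the surjectivity, write $\yy'=\{\yy\}\setminus\{\yy''\}$ and fix $y'_j$ with $g_j:=\lambda(y'_j)$. Since $L(\id a)+\langle\yy''\rangle$ is the whole space of linear forms and both summands are $\lambda$-graded, I split $y'_j=\ell_j+q_j$ with $\ell_j\in L(\id a)$ and $q_j\in\langle\yy''\rangle$, both $\lambda$-homogeneous of $\lambda$-degree $g_j$. Writing $\ell_j$ as a $k$-combination of those $(F_i)_1$ with $\lambda(F_i)=g_j$ and taking the same combination of the $F_i$ gives $G_j\in\id a$, $\lambda$-homogeneous of $\lambda$-degree $g_j$, of the form $G_j=y'_j-q_j+H_j$ with $H_j$ its part of standard degree $\ge 2$ (hence $\lambda$-homogeneous of $\lambda$-degree $g_j$). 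The key observation is that if a variable $y'_k$ occurs in $H_j$, it occurs in a monomial $y'_k\,m$ with $m\ne 1$, so $g_j=\lambda(y'_k)+\lambda(m)\succ\lambda(y'_k)$; thus $H_j$ involves only the $\yy''$ and the $y'_k$ with $\lambda(y'_k)\prec g_j$. Now I induct on the finite totally ordered set of values $\{\lambda(y'_1),\dots,\lambda(y'_e)\}$: assuming each $y'_k$ with $\lambda(y'_k)\prec g_j$ is congruent mod $\id a$ to a polynomial in $\yy''$, I substitute these congruences into $y'_j\equiv q_j-H_j\pmod{\id a}$ and conclude the same for $y'_j$; at the minimal value $H_j$ already lies in $k[\yy'']$, giving the base case. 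The conceptual reason this works is that the Jacobian $\bigl(\partial G_j/\partial y'_k\bigr)$ at the origin is the identity, so $(G_1,\dots,G_e)$ solves for the $y'$-variables; $\lambda$-homogeneity is precisely what forces the solution to be polynomial and makes the induction terminate. This step is the main obstacle: substituting $y'_j\mapsto q_j-H_j$ does \emph{not} lower standard degree, so one cannot induct naively on degree — one must instead use that $H_j$ only involves $y'$-variables of strictly smaller $\lambda$-degree and induct on $\lambda$-degree.

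For the final assertion, recall that the embedded Zariski tangent space $T_O(\pi(V))$, seen inside $T_O(\A^d)=\A^d$, is cut out by the linear forms $L(\id a\cap k[\yy''])$ (this is $T(\pi(V))$ in the notation of the preceding definition, using that $\pi(V)$ is a $\lambda$-cone so $O\in\pi(V)$); hence $T_O(\pi(V))=\A^d$ if and only if $L(\id a\cap k[\yy''])=0$. Now suppose $d=s-\dim_k L(\id a)$. Counting dimensions in the equality $L(\id a)+\langle\yy''\rangle=\{\text{linear forms}\}$ gives $\dim_kL(\id a)+d-\dim_k\bigl(L(\id a)\cap\langle\yy''\rangle\bigr)=s$, whence $L(\id a)\cap\langle\yy''\rangle=0$. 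But any element of $L(\id a\cap k[\yy''])$ is the standard-degree-$1$ part of an element of $\id a$ that lies in $k[\yy'']$, hence lies in $L(\id a)\cap\langle\yy''\rangle=0$. Therefore $L(\id a\cap k[\yy''])=0$ and $T_O(\pi(V))=\A^d$, as claimed.
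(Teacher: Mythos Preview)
Your argument is correct. The overall strategy coincides with the paper's: both show that the ring map $k[\yy'']\to k[\yy]/\id a$ is an isomorphism onto, so that $\pi$ restricts to an isomorphism $V\simeq \mathcal V(\id a\cap k[\yy''])$, and then read off the tangent-space statement from $L(\id a)\cap\langle\yy''\rangle=0$. The difference is in where the work is done. The paper simply asserts that $\id a$ admits $\lambda$-homogeneous generators of the shape $B_1+Q_1,\dots,B_e+Q_e,F_1,\dots,F_n$ with $Q_i,F_j\in k[\yy'']$ and the $B_i$ a basis of a complement of $\langle\yy''\rangle$ inside the linear forms, and then invokes \cite{RT}, Proposition~2.4, for the isomorphism. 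You instead supply a self-contained proof of surjectivity by the induction on the $\lambda$-degree of the eliminated variables, using that the higher-degree part $H_j$ of each relation $G_j$ can only involve $y'$-variables of strictly smaller $\lambda$-degree. That induction is precisely the content hidden behind the citation (and behind the unjustified claim that the $Q_i$ can be taken in $k[\yy'']$); your remark that substitution does not lower standard degree, so one must induct on $\lambda$-degree rather than on ordinary degree, pinpoints why Lemma~\ref{ordine} is essential here. Your treatment of the tangent-space assertion via the dimension count $L(\id a)\cap\langle\yy''\rangle=0$ is also slightly more explicit than the paper's one-line observation that the $F_j$ lie in $(\yy'')^2$.
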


\begin{proof} By hypothesis there are $\lambda$-homogeneous linear  forms $B_1, \dots, B_e$  in $L(\id{a})$ (where  $e=s-d$) such that $\{ B_1, \dots, B_e\}\cup y''$ is  a base for the $k$-vector space of the linear forms in $k[y]$. Then  $\id{a}$ has a set of $\lambda$-homogeneous generators   of the  type:
\begin{equation}\label{gen_a}
B_1 + Q_1 \ ,\ \ldots\ , \ B_e+ Q_e\ ,\ F_{1}\ ,\ \ldots\ ,\ F_{n} 
\end{equation} 
where   $Q_i, F_j\in k[\yy'']$ so that the  inclusion (which is the algebraic translation  of $\pi \colon V \to\pi(V)$): 
\begin{equation}\label{eliminate}
k[\yy'']/(F_{1}\, \ldots, F_{n}) \hookrightarrow
 k[ \yy]/\id{a}.
\end{equation}
  is in fact an isomorphism (see \cite{RT}, Proposition 2.4).
  
  If $d=s-\emph{dim}_k(L(\id{a}))$ then  $B_1, \dots, B_e$  generate $L(\id{a})$ so that  $F_j\in (y'')^2k[\yy'']$ and $T_O(\pi(V))$ is a linear space of dimension $d$ in $\A^d$, that  is $\A^d$ itself.  
\end{proof}

Note that the set of  variables $\yy''$ is not necessarily uniquely defined and so the isomorphism obtained in the previous result is  \lq\lq natural\rq\rq, but not canonical. In any case we may summarize the previous result saying  that \textit{every $\lambda$-cone can be embedded in its Zariski tangent space $T_O(V)$}. 

\medskip

As a straightforward consequence of  Theorem \ref{elim_th}  and  especially of (\ref{eliminate}) we obtain the following result.

\begin{Corollary} Let $V$ be a  $\lambda$-cone of dimension $d$ in $\A^s$ defined by a $\lambda$-homogeneous ideal $\id a$. The following are equivalent:
\begin{enumerate}
	\item the origin  is a smooth point for $V$;
	\item $V \simeq T_O(V)$;
	\item $\emph{dim}_k ( L(\id{a}))=d$;
	\item there is a $\lambda$-homogeneous linear subspace $\A^d$ in $\A^s$ such that the projection induces an isomorphism  $V\simeq \A^d$.
\end{enumerate}
\end{Corollary}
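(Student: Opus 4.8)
The plan is to prove the chain of equivalences cyclically, relying throughout on Theorem \ref{elim_th} and on the isomorphism (\ref{eliminate}). The key observation is that the integer $\dim_k(L(\id a))$ is an intrinsic invariant: after the minimal projection $\pi\colon\A^s\to\A^d$ with $d=s-\dim_k(L(\id a))$, the ideal of $\pi(V)$ has no linear part, so $T_O(\pi(V))=\A^d$ and hence $d=\dim_k T_O(V)$ (the projection being an isomorphism carrying $O$ to $O$ preserves the Zariski tangent space at the origin). Thus for any $\lambda$-cone one always has $\dim V\le d = \dim_k T_O(V)$, with equality exactly when $O$ is smooth — this is just the general fact that a point is smooth iff the dimension equals the embedding dimension of the local ring, together with the observation that the local ring of $V$ at $O$ agrees with that of $\pi(V)$ at $O$.

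First I would show (3)$\Rightarrow$(4)$\Rightarrow$(2)$\Rightarrow$(1)$\Rightarrow$(3). For (3)$\Rightarrow$(4): if $\dim_k(L(\id a))=d$ then $s-\dim_k(L(\id a))=s-d$, so we are exactly in the "moreover" case of Theorem \ref{elim_th}; choosing the $s-d$ variables $\yy''$ there, we get $F_j\in(\yy'')^2k[\yy'']$, and since $\dim V=d=\dim\A^d$ and $\A^d$ is irreducible, the inclusion $V\simeq\pi(V)\subseteq\A^d$ must be an equality, giving the required $\lambda$-homogeneous linear subspace. (One should note that $\pi(V)$ is a closed subvariety of $\A^d$ of the same dimension $d$ as the ambient irreducible space, hence equals it; this uses that $V$ and $\pi(V)$ are isomorphic so have equal dimension.) Then (4)$\Rightarrow$(2) is immediate because the linear subspace $\A^d$ in (4) is its own tangent space at $O$, and $\pi$ identifies it with $T_O(V)$ via the isomorphism $V\simeq\pi(V)$. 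The implication (2)$\Rightarrow$(1) is trivial: an affine space is smooth everywhere, in particular at $O$.

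The remaining implication (1)$\Rightarrow$(3) is the one I expect to be the main obstacle, though it is really just bookkeeping with the minimal embedding. Apply Theorem \ref{elim_th} with $d'=s-\dim_k(L(\id a))$, obtaining an isomorphism $V\simeq\pi(V)$ with $\pi(V)\subseteq\A^{d'}$ and $T_O(\pi(V))=\A^{d'}$. Since isomorphisms of varieties induce isomorphisms of local rings and of Zariski tangent spaces, $O$ is smooth on $\pi(V)$ iff it is smooth on $V$, and $\dim_k T_O(V)=\dim_k T_O(\pi(V))=d'$. Now smoothness of $\pi(V)$ at $O$ means $\dim\pi(V)=\dim_k T_O(\pi(V))$, i.e. $d=d'=s-\dim_k(L(\id a))$, which rearranges to $\dim_k(L(\id a))=s-d$. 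But wait — this gives $s-d$, not $d$; so I must reconcile: here $d'$ plays the role of the ambient dimension of the minimal embedding, and the claim "$\dim_k(L(\id a))=d$" in the Corollary should be read with $d=\dim V$. So the computation is $\dim V = \dim\pi(V) = \dim_k T_O(\pi(V)) = d'= s-\dim_k(L(\id a))$, whence $\dim_k(L(\id a)) = s - \dim V$, and since the Corollary writes $\dim V=d$... actually I would double-check the indexing here against the statement, as the Corollary uses $d$ for $\dim V$ while Theorem \ref{elim_th} uses $d$ for the ambient dimension of the projection target. Modulo this notational care, (1)$\Rightarrow$(3) follows from the standard criterion that a point of a variety is smooth iff the dimension of the local ring equals the vector-space dimension of $\id m/\id m^2$, applied to the minimal model $\pi(V)$ where $\id m/\id m^2$ has dimension exactly equal to the ambient dimension $s-\dim_k(L(\id a))$. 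This closes the cycle.
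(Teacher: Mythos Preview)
Your approach is exactly the one the paper intends: the Corollary is stated there as ``a straightforward consequence of Theorem~\ref{elim_th} and especially of~(\ref{eliminate})'', with no further argument, so your cycle $(3)\Rightarrow(4)\Rightarrow(2)\Rightarrow(1)\Rightarrow(3)$ via the minimal embedding is precisely the expected unpacking.

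You are also right to flag the indexing in condition~\textit{3}: since $\dim_k T_O(V)=s-\dim_k L(\id a)$ by Theorem~\ref{elim_th}, smoothness at $O$ (i.e.\ $\dim V=\dim_k T_O(V)$) is equivalent to $\dim_k L(\id a)=s-d$, not $d$. This looks like a typo in the statement, and your computation in $(1)\Rightarrow(3)$ confirms it.

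However, this same typo trips up your $(3)\Rightarrow(4)$. You start from the literal hypothesis $\dim_k L(\id a)=d$, correctly deduce that the minimal embedding uses $s-d$ variables $\yy''$, but then write ``$\pi(V)\subseteq\A^d$'' and ``$\dim V=d=\dim\A^d$''. That is inconsistent: with $s-d$ variables the target is $\A^{s-d}$, and $\dim\pi(V)=d$ does \emph{not} force $\pi(V)=\A^{s-d}$ unless $s=2d$. The fix is simply to work with the corrected condition $\dim_k L(\id a)=s-d$: then the minimal embedding has $d$ variables, the target is $\A^d$, and since $\pi(V)\subseteq\A^d$ is closed of dimension $d$ in an irreducible $d$-dimensional space, $\pi(V)=\A^d$ as you want. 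With that bookkeeping straightened out, your proof is complete and matches the paper's intended argument.
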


\section{The torus action  on a $\lambda$-cone} 
In the present section we will analyze the torus action  on a $\lambda$-cone $V$ defined by a $\lambda$-homogeneous ideal $\id a$ in $k[\yy]=k[y_1, \dots, y_s]$ . We may assume that the dimension $m$ of  the group $G$ is minimal, namely that the subgroup generated by $\lambda(y_1), \dots, \lambda (y_s)$ is a  $m$-dimensional lattice in  $\ZZ^m$.  
 
  If   $\lambda(y_i)=(n_{i1}, \dots, n_{ir})\in \ZZ^m$, we can associate to $\lambda$ the affine toric variety $\mathcal{T}\subseteq \A^r$ parametrically given by:
 \begin{equation} 
 \left\{ \begin{array}{lll} y_1 & = & t^{\lambda(y_1)} \\
 & \dots & \\
 y_s & = & t^{\lambda(y_s)}
  \end{array} \right.
 \end{equation}
 where  $t$ stands for $[t_1, \dots, t_m]$,  $t^{\lambda(y_i)}$ for $t_1^{n_{i1} }\dots t_r^{n_{ir}}$  and, for every $i$, the parameter $t_i $ varies in $ k^*$. Note that by construction the dimension of  $\mathcal{T}$ is precisely  $m$.

 There are natural actions of the torus  $\mathcal{T}$ on both $k[y]$ and $\A^s$ given by:
 \begin{equation} 
 \left\{ \begin{array}{lll} y_1 & \rightarrow  & y_1 t^{\lambda(y_1)} \\
 & \dots & \\
 y_s & \rightarrow &y_s t^{\lambda(y_s)}
  \end{array} \right.
 \end{equation}
 We will denote both of them again by $\lambda_{\mathcal{T}}$.
 
 \begin{remark}  In the above notation:  
 \begin{description}
 \item[i)]  a polynomial  $F\in k[y]$ is  $\lambda$-homogeneous of $\lambda$-degree $g$ $\Longleftrightarrow$ 
 $\lambda_{\mathcal{T}}(F)=  t^gF$;
 \item[ii)] a subvariety $V\subseteq \A^s$ is a $\lambda$-cone  $\Longleftrightarrow$ 
$\lambda_{\mathcal{T}}(V)=V$.
 \end{description}  
 \end{remark}
 
  For every point  $P\in \A^s$ we will denote by $\Lambda_P$ its orbit with respect to  $\lambda_{\mathcal{T}}$. For every $P\in \A^s$,   $\Lambda_P$ is a toric variety of  dimension $m(P)\leq m$; the only point $P$ such that $m(P)=0$ is the origin $P=O$ and, on the other hand, $m(P)=m$ if  no coordinate of $P$ is zero. 

 The following result collects some easy consequences of the definition of orbits and of the properties of $\lambda$-cones already proved.

\begin{prop} In the above notation, let $V$ be a $\lambda$-cone in $\A^s$. Then:
\begin{enumerate}
\item   $\Lambda_P\subset V_{\emph{red}}$ for every $P\in V$;
\item a reduced, closed subvariety $W\subseteq \A^s$ is a    $\lambda$-cone if and only if  it is the union of orbits, namely  $W=\bigcup_{P\in W} \Lambda_P$; 
\item if $P$ belongs to the singular locus $\emph{sing}(V)$ of  $V$, then  $\Lambda_P\subset \emph{sing}(V)$;
\item $\emph{sing}(V)$ is a $\lambda$-cone.
\end{enumerate} 
\end{prop}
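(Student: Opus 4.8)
The four statements are genuinely of decreasing elementariness, so I would treat them in the order 1, 2, 3, 4, letting each feed the next. For statement 1, the key observation is that $\Lambda_P$ is the image of the irreducible variety $\mathcal{T}\cong(k^*)^{m'}$ (or rather the map $t\mapsto \lambda_{\mathcal{T}}(t)\cdot P$) and hence is irreducible; it suffices to show $\Lambda_P\subseteq V_{\mathrm{red}}$. Since $V=\mathcal{V}(\id a)$ with $\id a$ a $\lambda$-homogeneous ideal, it is enough to check that each $\lambda$-homogeneous generator $F$ of $\id a$ vanishes on $\Lambda_P$: by remark (i), $F(\lambda_{\mathcal{T}}(t)\cdot P)=t^g F(P)=0$ since $P\in V$. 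Passing to $V_{\mathrm{red}}$ costs nothing because $\Lambda_P$ is reduced (a torus) and $V$ and $V_{\mathrm{red}}$ have the same support, so $\Lambda_P\subseteq V$ as sets gives $\Lambda_P\subseteq V_{\mathrm{red}}$ as schemes.

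For statement 2, one direction is immediate from item 1 applied to $W=W_{\mathrm{red}}$: a $\lambda$-cone is a union of orbits. Conversely, if $W=\bigcup_{P\in W}\Lambda_P$ is reduced and closed, I would show its radical ideal $\id a=I(W)$ is $\lambda$-homogeneous, i.e. stable under taking $\lambda$-homogeneous components. Using the total order on $G$ from Lemma \ref{ordine}, for $F\in\id a$ write $F=\sum_g F_g$; evaluating $F(\lambda_{\mathcal{T}}(t)\cdot P)=\sum_g t^g F_g(P)=0$ for every $P\in W$ and every $t$, and invoking that distinct characters $t\mapsto t^g$ of the torus are linearly independent, forces $F_g(P)=0$ for all $P\in W$, hence $F_g\in I(W)=\id a$. (Equivalently one can cite remark (ii) directly once one knows $\lambda_{\mathcal{T}}(W)=W$, which is exactly the "union of orbits" hypothesis; the only subtlety is that remark (ii) as stated is about $\lambda$-cones, so I would make the character-independence argument explicit rather than circular.)

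For statement 3, I would use that $\lambda_{\mathcal{T}}$ acts on $V$ by \emph{automorphisms} of the scheme $V$: for fixed $t\in\mathcal{T}$, the map $\varphi_t\colon y_i\mapsto y_i t^{\lambda(y_i)}$ is a $k$-algebra automorphism of $k[\yy]$ carrying $\id a$ to itself (since $\id a$ is $\lambda$-homogeneous, each generator is scaled by a unit $t^{g}$), hence induces an automorphism of $k[\yy]/\id a$, i.e. of $V$. Automorphisms preserve the singular locus, so $\varphi_t(\mathrm{sing}(V))=\mathrm{sing}(V)$ for every $t$; applied to a point $P\in\mathrm{sing}(V)$ this gives $\Lambda_P\subseteq\mathrm{sing}(V)$ on the level of points, and taking closures we get the scheme-theoretic containment since $\mathrm{sing}(V)$ is closed. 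Statement 4 is then the conjunction of statement 2 and statement 3: $\mathrm{sing}(V)$ is a reduced (or at least reduced-able — one works with $\mathrm{sing}(V)_{\mathrm{red}}$, or notes the singular locus is defined with its reduced structure here) closed subvariety which by item 3 is a union of orbits, hence by item 2 a $\lambda$-cone.

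The one place that needs genuine care — the main obstacle — is the interaction between the group-action statements and the scheme structures: "singular locus" and "$V_{\mathrm{red}}$" must be handled consistently, and the cleanest route is to phrase everything via the automorphisms $\varphi_t$ of the affine scheme $V$ and the irreducibility of the torus, rather than via set-theoretic orbits. A secondary technical point is the linear-independence-of-characters step in statement 2, which requires the torus to be genuinely $(k^*)^{m}$ with $\lambda(y_1),\dots,\lambda(y_s)$ spanning a rank-$m$ lattice — but that is exactly the minimality of $m$ assumed at the start of the section, so no extra hypothesis is needed.
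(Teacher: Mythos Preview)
Your argument is correct and is precisely the natural elaboration of what the paper has in mind. Note, however, that the paper does not actually give a proof of this proposition: it simply announces it as ``some easy consequences of the definition of orbits and of the properties of $\lambda$-cones already proved'' and passes immediately to examples. In particular, the paper evidently regards item~2 as already contained in remark~(ii) (the equivalence ``$V$ is a $\lambda$-cone $\Longleftrightarrow \lambda_{\mathcal{T}}(V)=V$''), so your worry about circularity is unfounded in context---remark~(ii) is stated as an independent observation, and invoking it for a reduced closed $W$ with $\lambda_{\mathcal{T}}(W)=W$ is legitimate. Your character-independence argument is of course the content of that remark made explicit, and is correct as written.

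The only place where you add more than the paper intends is the scheme-theoretic care around $V_{\mathrm{red}}$ and $\mathrm{sing}(V)$; this is fine, but since the paper is working throughout at the level of closed subvarieties and radical ideals (and treats $\mathrm{sing}(V)$ with its reduced structure), the extra caution is not strictly necessary here. Your automorphism argument for item~3 and the deduction of item~4 from items~2 and~3 are exactly what the paper expects the reader to supply.
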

  
  \begin{Example} Let us consider the group  $G=\ZZ^s$  and set $\lambda(y_i)=e_i$. If  $P_j$ is any point having  exactly $j$ non-zero coordinates, then its orbit  $\Lambda_{P_j}$ is a $j$-dimensional torus. Its closure   $V=\overline{\Lambda_{P_j}}$  in $\A^s$ is a linear space of dimension $j$, union of orbits,  that give a cellular decomposition  for it.
\end{Example}

\begin{Example}\label{nontorico} Let  us consider $G=\ZZ^2$  with the lexicographic order and put on  $k[y_1,y_2,y_3,y_4]$ the graduation given by $\lambda(y_1)=[1,2]$, $\lambda(y_2)=[1,0]$,  $\lambda(y_3)=[0,1]$, $\lambda(y_4)=[2,3]$: of course all the variables have a positive degree. 

The exceptional orbits are, besides $\Lambda_O$, those of the four points $P_1(1,0,0,0)$, $P_2(0,1,0,0)$, $P_3(0,0,1,0)$, $P_4(0,0,0,1)$: in fact   the dimension of $\Lambda_P$ can drop only if at least 3 of the coordinates vanish because any two different $\lambda(y_i)$,  $\lambda(y_j)$ are linearly independent. Their closure  is $\overline{\Lambda_{P_i}}=\Lambda_{P_i}\cup \Lambda_O$

For a general $P(a_1,a_2,a_3,a_4)\in \A^4$ the orbit $\Lambda_P$ is a 2-dimensional torus $(k_*)^2$. If for instance $a_2,  a_3\neq 0$ its closure   is the $\lambda$-cone given by the ideal $(a_2a_3^2y_1-a_1y_2y_3^2,a_2^2a_3^3y_4-a_4y_2^2y_3^3)$ and $\overline{\Lambda_P}\setminus {\Lambda_P}\subset \bigcup \overline{\Lambda_{Pi}}$. 
\end{Example}

Denote by $\mu(V)$ the maximal dimension of orbits in a $\lambda$-cone $V$.
 As $\mu(V)$ can be strictly lower than $\dim (V)$, orbits do not give in general a cellular decomposition of $V$ (see  Example \ref{nontorico}).
 
\begin{prop} Let $V$ be a $\lambda$-cone and let  $\mu_0$ be  any integer $0\leq \mu_0 \leq \mu=\mu (V)$. Then:
\begin{enumerate}
	\item  the set of points  $P\in V$ such that $m(P)\leq \mu_0$  is a closed  subset    of  $V$ and  a $\lambda$-cone;
	\item if $\mathrm{dim}(V) \geq 1$, then  $V$ contains some 1-dimensional orbit.
\end{enumerate}
\end{prop}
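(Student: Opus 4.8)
The plan is to prove both parts by working with the torus action $\lambda_{\mathcal{T}}$ and the explicit parametric description of orbits. For part 1, I would first observe that the function $P\mapsto m(P)$ has a combinatorial description: writing $Z(P)\subseteq\{1,\dots,s\}$ for the set of indices $i$ with the $i$-th coordinate of $P$ equal to zero, the orbit $\Lambda_P$ is parametrized by $t\mapsto (\dots, a_i t^{\lambda(y_i)},\dots)$ with $a_i\neq 0$ exactly for $i\notin Z(P)$, so $m(P)$ equals the rank of the sublattice of $\ZZ^m$ generated by $\{\lambda(y_i) : i\notin Z(P)\}$. Crucially, $m(P)$ depends only on $Z(P)$, hence only on which coordinate hyperplanes contain $P$. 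Therefore the locus $\{P\in\A^s : m(P)\leq\mu_0\}$ is a union of coordinate linear subspaces intersected with their complements' closures — more precisely it is the union, over all subsets $S\subseteq\{1,\dots,s\}$ with $\rk\langle \lambda(y_i):i\notin S\rangle\leq\mu_0$, of the coordinate subspace $\{y_i=0 : i\in S\}$; this is visibly Zariski closed in $\A^s$. Intersecting with $V$ (itself closed) gives a closed subset of $V$. To see it is a $\lambda$-cone, I would invoke Proposition (the one just before this statement, part 2): the set is reduced and closed and is a union of orbits, since if $m(P)\leq\mu_0$ then every point of $\Lambda_P$ has the same zero-coordinate set and hence the same value of $m$; then being a union of orbits it is a $\lambda$-cone. (Alternatively one notes each coordinate subspace $\{y_i=0 : i\in S\}$ is itself a $\lambda$-cone, defined by the $\lambda$-homogeneous ideal $(y_i : i\in S)$, and finite unions and intersections of $\lambda$-cones are $\lambda$-cones by Lemma~\ref{primari}.)

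For part 2, suppose $\dim V\geq 1$; I want to produce a point $P\in V$ with $m(P)=1$. Consider the closed subset $C_0=\{P\in V : m(P)=0\}=\{O\}$ and, for $\mu_0=0$ in part 1, it is just the origin. If $V$ had no $1$-dimensional orbit, then every $P\in V$ would satisfy either $m(P)=0$ (so $P=O$) or $m(P)\geq 2$; that is, $V\setminus\{O\}$ would be covered by orbits of dimension $\geq 2$. I would derive a contradiction by a dimension count: take any irreducible component $V'$ of $V$ through the origin with $\dim V'\geq 1$ (one exists since $O\in V$ and $\dim V\geq 1$, using that $V$ is a $\lambda$-cone so all components pass through $O$). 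Since $\lambda_{\mathcal{T}}$ is connected and permutes the components, we may pass to a $\lambda$-cone structure on $V'$ (Corollary at the end of \S2). Now $V'\setminus\{O\}$ is nonempty, irreducible of dimension $\dim V'\geq 1$, and is a union of orbits each of dimension $m(P)$; the generic orbit in $V'$ has some dimension $\mu'=\mu(V')$, and the locus where $m(P)\leq\mu'-1$ is a proper closed subset (by part 1), so a generic $P\in V'$ has $m(P)=\mu'$. I then want to find inside $\overline{\Lambda_P}$, which lies in $V'$, a point on a lower-dimensional orbit: the boundary $\overline{\Lambda_P}\setminus\Lambda_P$ is nonempty (a $\mu'$-dimensional torus is not complete, and its closure in $\A^s$ adds boundary points obtained by sending some $t^{\lambda(y_i)}\to 0$), and every boundary point $Q$ has strictly larger zero-coordinate set, hence $m(Q)<m(P)=\mu'$. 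Iterating this descent from any point on an orbit of dimension $\geq 2$, we reach a point on an orbit of dimension exactly $1$, unless the descent jumps from dimension $\geq 2$ directly past $1$ to $0$.

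The main obstacle is exactly this last point: ruling out that the orbit dimension can drop by more than one along the boundary, i.e.\ ensuring we can always land on a $1$-dimensional orbit rather than skipping to $\{O\}$. I would handle it as follows. Pick $P$ with $\Lambda_P$ of dimension $r=m(P)\geq 1$ chosen minimal among all orbits in $V$ of positive dimension; I claim $r=1$. The orbit $\Lambda_P$ is parametrized by a surjection $(k^*)^m\to\Lambda_P$ with kernel a subtorus, so $\Lambda_P\cong(k^*)^r$. Restrict the parametrization to a generic $1$-parameter subgroup $t\mapsto(t^{w_1},\dots,t^{w_m})$ of $(k^*)^m$ that is not contained in the kernel: its image is a $1$-dimensional subtorus $\Lambda'\subseteq\Lambda_P\subseteq V$, and $\overline{\Lambda'}$ is a $\lambda$-cone contained in $V$ (it is irreducible, reduced, $1$-dimensional, and — being the closure of a single orbit — a union of orbits, so a $\lambda$-cone by the Proposition above). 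Now $\Lambda'$ itself is a $1$-dimensional orbit of the torus action restricted appropriately; more directly, a generic point $P'\in\Lambda'$ has the same zero-coordinate set as $P$ is false in general, so instead I argue: $\overline{\Lambda'}$ has dimension $1$, so $\mu(\overline{\Lambda'})\leq 1$, and it contains a positive-dimensional orbit (namely any orbit through a point of $\Lambda'$ not equal to $O$), forcing that orbit to be $1$-dimensional. Since that orbit lies in $V$, we are done. This sidesteps the jump problem by cutting down to a curve first, where the only possible positive orbit dimension is $1$. The routine verifications — that a $1$-parameter subgroup generic enough exists, that the closure of a curve-orbit is a union of orbits, and the rank computations — I would leave to the reader or dispatch with one-line references to the preceding results.
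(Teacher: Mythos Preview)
Your treatment of part~1 is correct and essentially coincides with the paper's: both observe that $m(P)$ depends only on the zero-coordinate set of $P$, so that $\{P\in\A^s:m(P)\leq\mu_0\}$ is a finite union of coordinate linear subspaces, hence Zariski closed and a $\lambda$-cone; intersecting with $V$ gives the claim.

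For part~2, however, there is a genuine gap in your argument. Your curve $\Lambda'$ is the image of a one-parameter subgroup acting on $P$, so $\Lambda'\subseteq\Lambda_P$; but $\Lambda'$ is \emph{not} a $\mathcal{T}$-orbit, and its closure $\overline{\Lambda'}$ is \emph{not} a $\lambda$-cone. Indeed, every point $P'\in\Lambda'$ has exactly the same zero-coordinate set as $P$ (its $i$-th coordinate is $a_i\,t^{\,w\cdot\lambda(y_i)}$, nonzero iff $a_i\neq 0$), so $m(P')=m(P)=r$ and the full $\mathcal{T}$-orbit through $P'$ is all of $\Lambda_P$, which is $r$-dimensional and certainly not contained in the one-dimensional set $\overline{\Lambda'}$. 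Your claim that ``$\mu(\overline{\Lambda'})\leq 1$ and it contains a positive-dimensional orbit'' therefore fails: no positive-dimensional $\mathcal{T}$-orbit lies inside $\overline{\Lambda'}$, and you have not produced any point of $V$ whose $\mathcal{T}$-orbit has dimension~$1$. (You even note along the way that ``$P'\in\Lambda'$ has the same zero-coordinate set as $P$'' and call this false; it is in fact true, and it is precisely what breaks the argument.)

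The paper sidesteps this difficulty with a short induction on the ambient dimension $s$ rather than on orbit dimension. If $\dim V=1$ the claim is immediate, since orbits in $V$ have dimension $\leq 1$ and the only $0$-dimensional orbit is $\{O\}$. If $\dim V\geq 2$, one intersects $V$ with the coordinate hyperplane $y_s=0$ to obtain a $\lambda$-cone $W$ of dimension $\geq\dim V-1\geq 1$, regards $W$ as a $\lambda$-cone in $\A^{s-1}$ for the grading induced by $\lambda$, and applies the inductive hypothesis. The point that makes this work---and that your approach lacks---is that for a point $Q$ with $y_s$-coordinate zero, the $\mathcal{T}$-orbit of $Q$ in $\A^s$ and the orbit of $Q$ for the induced torus on $\A^{s-1}$ coincide, so a $1$-dimensional orbit found in $W\subseteq\A^{s-1}$ is automatically a $1$-dimensional $\mathcal{T}$-orbit in $V$.
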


\begin{proof} For \textit{1.} we may assume that $V$ is the affine space $\A^s$. If $P(a_1, \dots,a_s)$ is any point in $\A^s$, then $ m(P) $ is the dimension of the lattice generated by the set $\{\lambda(y_i) \hbox{ s.t. }  a_i\neq 0\}$. Then  $m(P)\leq \mu_0$ if and only if  there is a set of indexes $i_1, \dots, i_h$ such that  $\lambda(y_{i_1}), \dots, \lambda(y_{i_{h}})$ generate a lattice of dimension $\mu_0$ and $a_j=0$ for ever $j\neq i_1, \dots,i_h$. Thus $\{P \hbox{ t.c. } m(P)\leq \mu_0\}$ is the union of suitable intersections of coordinate hyperplanes. 

\medskip

 \textit{2.} is clearly true if  $d=\dim(V)=1$,  because every  orbit in $V$  has dimension $\leq 1$ and the only 0-dimensional orbit is $\Lambda_O$. Then assume $d\geq 2$ and  proceed by induction on  $s$. 

Let $W$ the $\lambda$-cone defined by $\id{a}+(y_s)$ whose dimension $d'$ satisfies the inequality $ d'\geq d-1\geq 1$. We can also consider $W$ as the $\lambda$-cone in $\A^{s-1}$ defined by $(\id{a}+(y_s))\cap k[y_1, \dots.y_{s-1}]$ (with the graduation induced by $\lambda$) and conclude by induction that $W$ contains some 1-dimensional orbit. Then also $V$ does, because  $W\subset V$ and so all the orbits on $W$ are also orbits on $V$. 
\end{proof}

\begin{Example}  Consider the group $G=\ZZ^2$ and the graduation on $k[y_1,y_2,y_3,y_4]$ given by $\lambda(y_1)=[1,0]$ and $\lambda(y_2)=\lambda(y_3)=\lambda(y_4)=[0,1]$. The set of points $P\in \A^4$ with $m(P)=1$ is the union of the hyperplane $y_1=0$ and the 1-dimensional linear space $y_2=y_3=y_4=0$.
\end{Example}

\begin{prop} Let  $V$ be an irreducible  $\lambda$-cone and let  $U$ be the open subset  $V$ of points $P$ such that  $dim(\Lambda_P)=\mu$ is maximal in $V$.  Then we can find $\mu$ variables $y_{i_1}, \dots, y_{i_{\mu}}$ such that  the linear space $H_{{i_1}, \dots,{i_{\mu}}}$ given by   $ y_{i_1}= \dots = y_{i_{\mu}}=1$ meets   $\Lambda_{P}$ in   finitely many points  if  $P\in  U_{{i_1}, \dots,{i_{\mu}}}\cap U$ and does not meet $\Lambda_P$ if $P\in V \setminus  (U_{{i_1}, \dots,{i_{\mu}}}\cap U)$.

If $\lambda(y_{i_1}), \dots, \lambda(y_{i_{\mu}})$ generate the same lattice than $\lambda(y_1), \dots, \lambda(y_s)$, then  $H_{{i_1}, \dots,{i_{\mu}}}$ meets $\Lambda_P$ in exactly one point for every $P\in  U_{{i_1}, \dots,{i_{\mu}}}\cap U$. In this case  on the  open set  $ U_{{i_1}, \dots,{i_{\mu}}} \cap V$ of $V$  the quotient under the torus action  is naturally isomorphic to the affine variety $V_{{i_1}, \dots,{i_{\mu}}}= U\cap H_{{i_1}, \dots,{i_{\mu}}} $. 
\end{prop}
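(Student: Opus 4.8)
Throughout, for $P=(a_1,\dots,a_s)$ write $\Supp P=\{j:a_j\neq 0\}$ and let $\phi_P\colon\mathcal{T}\to\A^s$, $\phi_P(t)=(a_jt^{\lambda(y_j)})_j$, be the orbit map, so that $\Lambda_P=\phi_P(\mathcal{T})$, $\mathrm{Stab}_P=\{t\in\mathcal{T}:t^{\lambda(y_j)}=1\text{ for all }j\in\Supp P\}$, and $m(P)=\dim\Lambda_P=\rank\langle\lambda(y_j):j\in\Supp P\rangle=m-\dim\mathrm{Stab}_P$; also write $U_{i_1,\dots,i_\mu}=\{P:a_{i_1}\cdots a_{i_\mu}\neq 0\}$. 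The plan is to reduce everything to the analysis of these orbit maps and their stabilizers.

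First I would choose the variables. Let $S$ be the set of indices $i$ with $y_i$ not identically zero on $V$; since $V$ is irreducible, a general point of $V$ has support exactly $S$ and, since $U$ is dense, also lies in $U$, so $\mu=\rank\langle\lambda(y_i):i\in S\rangle$. Hence I can pick $i_1,\dots,i_\mu\in S$ with $\lambda(y_{i_1}),\dots,\lambda(y_{i_\mu})$ linearly independent over $\Q$, and then $U_{i_1,\dots,i_\mu}\cap V$ is nonempty (it contains a general point of $V$). Observe that if $P\in V$ has $a_{i_1},\dots,a_{i_\mu}\neq 0$, then $m(P)\geq\rank\langle\lambda(y_{i_1}),\dots,\lambda(y_{i_\mu})\rangle=\mu$, hence $m(P)=\mu$; this already gives $U_{i_1,\dots,i_\mu}\cap V\subseteq U$, and since every point of $H_{i_1,\dots,i_\mu}$ has $i_k$-th coordinates $1\neq 0$, also $V\cap H_{i_1,\dots,i_\mu}=U\cap H_{i_1,\dots,i_\mu}=V_{i_1,\dots,i_\mu}$.

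Next I would prove the first assertion. If $P\notin U_{i_1,\dots,i_\mu}$, then some $a_{i_k}=0$, so the $i_k$-th coordinate of every point of $\Lambda_P$ is $0$ and $\Lambda_P\cap H_{i_1,\dots,i_\mu}=\emptyset$. For $P\in U_{i_1,\dots,i_\mu}\cap V$ I would introduce $\Theta=\{t\in\mathcal{T}:t^{\lambda(y_{i_k})}=1,\ k=1,\dots,\mu\}$, of dimension $m-\mu$ by the linear independence, and use that $t\mapsto(t^{\lambda(y_{i_k})})_k$ is a surjective homomorphism $\mathcal{T}\to(k^*)^\mu$ (surjective precisely because the $\lambda(y_{i_k})$ are $\Q$-linearly independent and $k$ is algebraically closed) to get $t_0\in\mathcal{T}$ with $t_0^{\lambda(y_{i_k})}=a_{i_k}^{-1}$. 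A short computation identifies $\{t:\phi_P(t)\in H_{i_1,\dots,i_\mu}\}$ with the coset $t_0\Theta$, so $\Lambda_P\cap H_{i_1,\dots,i_\mu}=\phi_P(t_0\Theta)$; since $i_1,\dots,i_\mu\in\Supp P$ we have $\mathrm{Stab}_P\subseteq\Theta$, and $\phi_P$ is constant exactly on cosets of $\mathrm{Stab}_P$, so it identifies $\Lambda_P\cap H_{i_1,\dots,i_\mu}$ with $t_0\Theta/\mathrm{Stab}_P$, a finite set because $\dim\mathrm{Stab}_P=m-\mu=\dim\Theta$.

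Finally, under the extra hypothesis that $\lambda(y_{i_1}),\dots,\lambda(y_{i_\mu})$ generate the whole lattice $\langle\lambda(y_1),\dots,\lambda(y_s)\rangle$, I would note that for $P\in U_{i_1,\dots,i_\mu}\cap V$ the group $\langle\lambda(y_j):j\in\Supp P\rangle$ is contained in $\langle\lambda(y_{i_k})\rangle$, so $\Theta\subseteq\mathrm{Stab}_P\subseteq\Theta$; thus $\Theta=\mathrm{Stab}_P$ and $\Lambda_P\cap H_{i_1,\dots,i_\mu}$ is a single point. To identify the quotient, I would write $\lambda(y_j)=\sum_k c_{jk}\lambda(y_{i_k})$ with $c_{jk}\in\Z$ (unique, by the independence) and define $q\bigl((a_j)_j\bigr)=\bigl(a_j\prod_k a_{i_k}^{-c_{jk}}\bigr)_j$ on $U_{i_1,\dots,i_\mu}\cap V$: this is a morphism, it lands in $H_{i_1,\dots,i_\mu}$ because $c_{i_l,k}=\delta_{lk}$, and $q(P)=\phi_P(t_0)\in\Lambda_P$, so $q(P)$ is exactly the unique point of $\Lambda_P\cap H_{i_1,\dots,i_\mu}\subseteq V_{i_1,\dots,i_\mu}$. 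Hence $q$ is constant on orbits, separates distinct orbits, is surjective onto $V_{i_1,\dots,i_\mu}$, and restricts to the identity there, which exhibits $V_{i_1,\dots,i_\mu}$ as the quotient of $U_{i_1,\dots,i_\mu}\cap V$ by the torus action. I expect the main obstacle to be the bookkeeping with sublattices of $\Z^m$ that need not be primitive: the groups $\mathrm{Stab}_P$ and $\Theta$ are in general disconnected, so dimension counts alone are not enough, and the existence of $t_0$ (which uses both the linear independence and the algebraic closedness of $k$) must be handled with care; once the identity $\Theta=\mathrm{Stab}_P$ of the last step is secured, the remaining verifications are routine.
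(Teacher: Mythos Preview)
Your argument is correct and follows essentially the same route as the paper: identify the indices on which $V$ is generically nonvanishing, choose $\mu$ of them with $\Q$-independent weights, and analyze $\Lambda_P\cap H_{i_1,\dots,i_\mu}$ via the orbit parametrization, with the lattice hypothesis forcing uniqueness. Your treatment is in fact more careful than the paper's (which speaks of ``the determinant of the matrix with rows $\lambda(y_{i_1}),\dots,\lambda(y_{i_\mu})$'', tacitly assuming $\mu=m$), and your explicit construction of the retraction $q$ supplies the ``naturally isomorphic'' step that the paper leaves implicit.
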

\begin{proof} Up to a permutation of indexes, we can assume that   $V$ is contained in the linear space  $L: y_{h+1}=\dots= y_r=0$ and is not contained in the hyperplane $y_i=0$ for every  $i\leq h$. Then the maximal dimension $\mu$ of orbits in $V$ is the dimension of the  $k$-vector space generated by    $\lambda(y_1), \dots, \lambda(y_h)$. For every choice of  $\mu$ variables such that $\lambda(y_{i_1}) , \dots, \lambda(y_{i_{\mu}})$ are linearly independent;  let $ U_{{i_1}, \dots,{i_{\mu}}}$ be the complement  in $\A^s$ of the union of the hyperplanes $y_{i_j}=0$, $j=1, \dots, \mu$.  A point  $P$ belongs to $ U_{{i_1}, \dots,{i_{\mu}}} $ if and only if it has non-zero  ${i_1} , \dots, i_{\mu}$ coordinates, so that we can find in its orbit a point with every ${i_1} , \dots, i_{\mu}$-coordinate equal to 1; if we take two such points  in $\Lambda_P$,  their  $i$-th coordinates can differ only up to a $r$-th root of 1, where $r$ is the absolute value of the determinant of the matrix with row  $\lambda(y_{i_1}) , \dots, \lambda(y_{i_{\mu}})$.

On the other hand,  $P$ does not belong to  $ U_{{i_1}, \dots,{i_{\mu}}} $ if  some $i_j$-coordinate is $0$ and then the same property holds for every point in its orbit.
\end{proof}

 If we can find sufficiently many \lq\lq good\rq\rq\ sets of $\mu$ variables (such that the corresponding determinant is $1$) and obtain a covering of $U$ by the  open subsets $ U_{{i_1}, \dots,{i_{\mu}}} $, we can  glue together the affine varieties $V_{{i_1}, \dots,{i_{\mu}}}$  and obtain  a scheme that we can consider as the best approximation of a quotient of $V$ over $G$; however  this scheme is not in general a good geometrical object: for instance it is not necessary separated.  For a general discussion of this topic  see \cite{CLS} Ch. 5 \S \ Constructing quotients. 

\medskip

\begin{Corollary} A  $\lambda$-cone $V$ is a (not necessary smooth) rationally chain connected variety. More precisely it is covered by rational curves passing through the origin $O$, smooth outside the origin (though some of them can be contained in $\mathrm{Sing}(V)$).
\end{Corollary}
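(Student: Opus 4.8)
The plan is to exhibit, through each point $P \in V$, a rational curve lying in $V$ and passing through the origin $O$; since $O$ is common to all these curves, any two points of $V$ are then joined by a chain of (at most two) such curves, which gives rational chain connectedness, and the family of curves covers $V$. The natural candidate through $P$ is the closure $\overline{\Lambda_P}$ of a suitably restricted orbit. Indeed, by Proposition on orbits (item 1) we already know $\overline{\Lambda_P} \subseteq V_{\mathrm{red}} \subseteq V$, and by Lemma \ref{ordine} the origin $O$ lies in every $\lambda$-cone, in particular in the $\lambda$-cone $\overline{\Lambda_P}$. The remaining issue is that $\Lambda_P$ itself may have dimension $m(P) > 1$, so it is not a curve; I would cut it down to dimension one.

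First I would reduce to the case $P \neq O$ (if $P = O$ there is nothing to prove, or one simply uses any of the curves already constructed). Write $\lambda(y_{i_1}), \dots, \lambda(y_{i_h})$ for the $\lambda$-degrees of the variables not vanishing at $P$; these generate a lattice of rank $m(P) = \dim \Lambda_P$. Choose a one-parameter subgroup of the torus $\mathcal{T}$, i.e. a homomorphism $\gamma : k^* \to \mathcal{T}$, $\gamma(t) = (t^{a_1}, \dots, t^{a_m})$ for suitable integers $a_j$, chosen so that the integer pairing $\langle (a_1,\dots,a_m), \lambda(y_{i_\ell})\rangle$ is strictly positive for every $\ell = 1, \dots, h$ — such a $\gamma$ exists precisely because, by Lemma \ref{ordine}, $G = \ZZ^m$ carries a total order with all $\lambda(y_i) \succ 0_G$, so a strictly positive linear functional on the finite set $\{\lambda(y_{i_\ell})\}$ can be found (and then cleared of denominators). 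The image $C_P := \overline{\gamma(k^*) \cdot P}$ is then a one-dimensional $\lambda_{\mathcal{T}}$-invariant subvariety of $\Lambda_P \subseteq V$; parametrically it is $t \mapsto (a_i t^{d_i})_{i}$ with $d_i = \langle a, \lambda(y_i)\rangle > 0$ for the nonzero coordinates $a_i$ of $P$, so $C_P$ is the image of $\A^1$ under a morphism sending $0 \mapsto O$ and $1 \mapsto P$ (after rescaling the parameter), hence a rational curve through $O$ and through $P$. Because all exponents $d_i$ are positive, this map is injective away from $t = 0$, so $C_P$ is smooth outside $O$ (a normalization argument, or the explicit monomial parametrization, shows the only possible singularity is at the origin).

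Then I would assemble the conclusions: the curves $\{C_P\}_{P \in V}$ cover $V$ (each $P$ lies on $C_P$), each is rational, smooth outside $O$, and contains $O$; so for $P, Q \in V$ the chain $C_P \cup C_Q$ connects $P$ to $Q$, giving rational chain connectedness. For the parenthetical remark that some $C_P$ may lie in $\mathrm{Sing}(V)$: by the Proposition on orbits (item 3), if $P \in \mathrm{sing}(V)$ then $\Lambda_P \subseteq \mathrm{sing}(V)$, hence $C_P \subseteq \overline{\mathrm{sing}(V)} = \mathrm{Sing}(V)$, and Example \ref{nontorico} (or the earlier example with $\mu(V) < \dim V$) shows this genuinely occurs. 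The main obstacle is the middle step: one must check that the chosen one-parameter subgroup really does produce a \emph{one-dimensional} orbit closure with \emph{all} exponents positive and with the origin as limit point $t \to 0$ — this is exactly where the orderability hypothesis of Lemma \ref{ordine} is used, and one should be slightly careful that different nonzero coordinates of $P$ may get different (but all positive) weights $d_i$, so the curve is a monomial curve rather than a line, which is why smoothness can fail only at $O$.
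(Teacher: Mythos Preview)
Your proof is correct and follows essentially the same line as the paper's: pick, via Lemma~\ref{ordine}, a one-parameter subgroup of the torus whose weights $d_i=\langle a,\lambda(y_i)\rangle$ on the nonvanishing coordinates of $P$ are all positive, so that $t\mapsto (a_i t^{d_i})$ parametrizes a rational curve in $\overline{\Lambda_P}\subseteq V$ through both $P$ and $O$. One small imprecision: positivity of the $d_i$ does \emph{not} by itself make the parametrization injective on $k^*$ (e.g.\ all $d_i$ even); the paper simply divides the exponents by their common factor first, and with that adjustment your smoothness-outside-$O$ claim goes through.
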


\begin{proof} It is sufficient to prove the statement assuming that $V$ is the closure of an orbit  $V=\overline{\Lambda_P}$ of a point $P(a_1, \dots, a_s)$. By our general hypothesis on the graduation there is a vector $\omega\in \mathbb{Z}^n$ such that $\lambda(y_i)\cdot \omega =c_i> 0$ for every $i=1, \dots,s$ (see Lemma 2.2). We can assume that $c_1, \dots, c_s$ have no common integral factor (dividing if necessary by common factors); then the rational curve parametrized by $(y_1=a_1t^{c_1}, \dots, y_s=a_st^{c_s})$ is completely contained in $V$.
\end{proof}

Finally let us recall the following result proved in \cite{FMS} under  some additional hypothesis (essentially a finite number of orbits) and in a far more general form in \cite{KK} (Theorem 1.1).

\medskip

\begin{Theorem} If $\id a$ is a $\lambda$-homogeneous ideal in $k[\yy]$, then the Chow group $A_{\bullet}(k[\yy]/\id{a})$ is  generated by $\lambda$-homogeneous  cycles.
\end{Theorem}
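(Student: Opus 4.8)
The plan is to reduce the statement to the closure of a single torus orbit and then exhibit an explicit \lq\lq homogenizing\rq\rq\ degeneration that moves an arbitrary subvariety to a $\lambda$-homogeneous one inside the Chow group. First I would recall that, by the properties of $\lambda$-cones already established, $V=\mathcal{V}(\id a)$ is $\lambda_{\mathcal{T}}$-invariant, so the torus $\mathcal{T}$ of dimension $m$ acts on $V$; by Lemma \ref{ordine} there is a one-parameter subgroup $\varphi\colon k^*\to\mathcal{T}$, $t\mapsto (t^{c_1},\dots,t^{c_m})$ coming from a vector $\omega$ with $\lambda(y_i)\cdot\omega=c_i>0$ for all $i$, whose induced action on $\A^s$ contracts the whole $\lambda$-cone to the origin as $t\to 0$. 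This is the crucial technical input: every $\lambda$-cone admits a contracting $k^*$-action.

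Next I would set up the deformation. Given an arbitrary irreducible closed subvariety $Z\subseteq V$ of dimension $i$, consider its flat orbit closure: the map $k^*\times Z\to V$, $(t,z)\mapsto \varphi(t)\cdot z$, and take the closure $\widetilde{Z}$ of its image in $\A^1\times V$ over the affine line with coordinate $t$. The general fibre over $t\neq 0$ is $\varphi(t)\cdot Z\cong Z$, while the special fibre over $t=0$ is a cycle $Z_0$ of the same dimension $i$ supported on the $\varphi$-fixed locus; since $\varphi$ is contracting with a single fixed point, $Z_0$ is supported at the origin for positive-dimensional $Z$ — so instead I would use the $k^*$-action only partially, namely I would invoke the general principle (Bialynicki-Birula / flat limits under $\mathbb{G}_m$) that the rational equivalence class $[Z]$ equals $[Z_0]$ where $Z_0=\lim_{t\to 0}\varphi(t)\cdot Z$ is a $\varphi$-invariant cycle, and then observe that $\varphi$-invariance together with $\lambda$-homogeneity of $V$ forces each component of $Z_0$ to be itself a $\lambda_{\mathcal{T}}$-invariant subvariety, hence (by the remark following the definition of $\lambda_{\mathcal{T}}$) a $\lambda$-cone. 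This is exactly the content of \cite{KK}, Theorem 1.1, and in the special situation of \cite{FMS} where there are finitely many orbits one gets the cleaner statement that the finitely many orbit closures already generate.

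Carrying this out, the main steps are: (1) produce the contracting one-parameter subgroup from Lemma \ref{ordine}; (2) form the flat family $\widetilde{Z}\to\A^1$ and check flatness (the total space is irreducible, dominates $\A^1$, and $\A^1$ is a smooth curve, so flatness is automatic); (3) apply the specialization map on Chow groups, $[Z]=[Z_0]$ in $A_i(V)$; (4) argue that every component of $Z_0$ is $\varphi$-invariant and, using that $\varphi$ was chosen with \emph{all} $c_i>0$ so that $\varphi$-invariant subvarieties of the $\lambda$-cone are automatically $\lambda_{\mathcal{T}}$-stable, conclude each component is a $\lambda$-homogeneous cycle; (5) extend from prime cycles to all of $A_\bullet(k[\yy]/\id a)$ by linearity. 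Since $A_\bullet$ is generated by classes of integral subvarieties, this finishes the proof.

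The main obstacle is step (4): the flat limit $Z_0$ is a priori only invariant under the single $k^*$ chosen, not under the full torus $\mathcal{T}$, so it is not immediately a $\lambda$-cone. One must either choose $\omega$ generically enough that its fixed/limit data already detects $\mathcal{T}$-invariance, or iterate the degeneration over a chain of one-parameter subgroups spanning the cocharacter lattice, at each stage replacing $Z$ by a cycle invariant under one more direction while staying inside $V$ and not changing the class in $A_i(V)$. Handling the bookkeeping of this iteration — in particular ensuring the intermediate limits stay reduced-or-at-least trackable and that no new non-$\lambda$-homogeneous components appear — is the delicate point, and it is precisely what \cite{KK} does in full generality; I would simply cite it, noting that in the finite-orbit case of \cite{FMS} the orbit closures give an explicit finite generating set and the iteration terminates trivially.
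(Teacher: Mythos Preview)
The paper does not give its own proof of this theorem: the sentence immediately preceding the statement says it is \emph{recalled} from \cite{FMS} (under the extra hypothesis of finitely many orbits) and from \cite{KK} (Theorem~1.1) in general, and no argument follows. So there is nothing in the paper to compare your proposal against; your sketch already contains strictly more than the paper does, and where you hit the genuine difficulty (your step~(4)) you end up citing \cite{KK} as well, which is exactly the paper's stance.

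On the content of your sketch itself: the overall degeneration-to-the-limit strategy via a one-parameter subgroup and specialization in Chow groups is the right shape, and you correctly isolate the real issue, namely that invariance under a single generic $k^*$ does not immediately give invariance under the full torus $\mathcal{T}$, so one must either iterate over a spanning set of cocharacters or invoke \cite{KK}. One small wobble: the parenthetical ``since $\varphi$ is contracting with a single fixed point, $Z_0$ is supported at the origin for positive-dimensional $Z$'' is not correct --- the flat limit of an $i$-dimensional subvariety under a $\mathbb{G}_m$-action is an $i$-dimensional $\varphi$-invariant cycle, not a fat point at the origin (think of the line $y_2=1$ in $\A^2$ degenerating to $y_2=0$). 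You immediately pivot as if this forced a change of plan, but in fact your original plan is fine at that step; the only genuine gap is the passage from $\varphi$-invariance to $\mathcal{T}$-invariance, which you have already flagged.
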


\begin{Example} With the same graduation on $k[y_1,\dots,y_4]$ introduced in Example \ref{nontorico},
  let $V$ be the threefold in $\A^4$  defined by the  polynomial $F=y_1^2y_2y_3+y_1y_4+y_2y_3^2y_4$, which is $\lambda$-homogeneous of $\lambda$-degree $[3,5]$.  A general point in $V$ has a 2-dimensional orbit, so that a dense open subset of $V$ is covered by a 1-dimensional family of   $(k_*)^2$. Moreover all the 5 exceptional orbits belong to $V$ and especially $\Lambda_{P_2}\cup\Lambda_{P_3}\cup\Lambda_{O}=s\emph{ing} (V)$.
  
  The Chow group of $V$ is generated by $\lambda$-homogeneous cycles. Then $A_0(V)$ is generated by (the class of) $\overline{\Lambda_O}$ and $A_1(V)$ by $\overline{\Lambda_{P_1}}$ $\dots$,  $\overline{\Lambda_{P_4}}$. Finally $A_2(V)$ is generated for instance by $\overline{\Lambda_{Q_1}}$ and $\overline{\Lambda_{Q_2}}$, where $Q_1=(0,0,1,1)$ and $Q_2=(0,1,0,1)$. In fact   $3\overline{\Lambda_{Q_1}}+5 \overline{\Lambda_{Q_2}}$ is cut out by the $\lambda$-homogeneous hypersurface given by $y_1+y_2y_3^2$ and  the projection of $\A^4$ on the hyperplane $y_4=0$ gives an isomorphism from $V\setminus (\overline{\Lambda_{Q_1}}\cup \overline{\Lambda_{Q_2}})$ to an   open subset of $\A^3$; of course these two generators of $A_2(V)$ are not free, because  $3\overline{\Lambda_{Q_1}}+5 \overline{\Lambda_{Q_2}}$ is equivalent to 0.
\end{Example}

{\bf Received: Month xx, 200x}

\end{document}